\documentclass[11pt,a4paper]{article}
\usepackage{amsmath,amssymb,amsthm}
\usepackage[margin=2.7cm]{geometry}
\usepackage{booktabs}
\usepackage{setspace,tikz,graphicx}
\usepackage[colorlinks=true,linkcolor=blue,citecolor=blue,urlcolor=blue]{hyperref}
\usepackage{soul}
\theoremstyle{plain}
\newtheorem{theorem}{Theorem}[section]
\newtheorem{lemma}[theorem]{Lemma}
\newtheorem{proposition}[theorem]{Proposition}
\newtheorem{corollary}[theorem]{Corollary}
\newtheorem{conjecture}[theorem]{Conjecture}
\theoremstyle{definition}

\newtheorem{question}[theorem]{Question}
\theoremstyle{remark}

\newcommand{\dt}{\mathrm{td}}
\newcommand{\FTD}{\mathrm{FTD}}

\newcommand{\Hw}{\mathbb{H}}

\newcommand{\bin}{\binom{[4]}{2}}

\title{Every cubic graph has four total dominating sets\\
covering each vertex at most twice}

\author{%
Hyemin Kwon\thanks{%
Department of Mathematics Education, 
Seoul National University, 
Seoul, Republic of Korea. 
\texttt{khmin1121@snu.ac.kr}
}
\and
Boram Park\thanks{%
Department of Mathematics Education, 
Seoul National University, 
Seoul, Republic of Korea. 
\texttt{borampark@snu.ac.kr}}
\and
Yoshio Sano\thanks{%
Department of Computer Science, 
Institute of Systems and Information Engineering, 
University of Tsukuba, 
Tsukuba, Ibaraki, Japan.
\texttt{sano@cs.tsukuba.ac.jp}}}
\date{}

\begin{document}
\maketitle

\begin{abstract}
In 2018, Goddard and Henning conjectured that every cubic graph has four total dominating sets covering each vertex at most twice, and asked the analogous question for transversals of $3$-uniform $3$-regular hypergraphs.
We prove the conjecture, which also answers the question in the affirmative.

\medskip\noindent
\textbf{Keywords:} total domination, total domatic number, coupon coloring, thoroughly dispersed coloring, cubic graph, Heawood graph, hypergraph transversal.

\smallskip\noindent
\textbf{2020 Mathematics Subject Classification:} 
05C69, 05C15, 05C65.
\end{abstract}

\section{Introduction}

All graphs in this paper are finite and simple.
For a graph $G$, let $N_G(v)$ be the open neighborhood of a vertex $v$, and $N_G(X)=\cup_{v\in X} N_G(v)$ for $X\subseteq V(G)$.
A set $T\subseteq V(G)$ is a \emph{total dominating set} of $G$ if every vertex of $G$ has a neighbor in $T$.
The \emph{total domatic number} $\dt(G)$ of $G$ is the largest number of pairwise disjoint total dominating sets of $G$.
It was introduced by Cockayne, Dawes, and Hedetniemi~\cite{CDH}, and it is also known as the \emph{coupon coloring number}~\cite{CKTV}, the largest $k$ for which $V(G)$ has a coloring with $k$ colors in which every vertex sees all $k$ colors in its neighborhood.

Zelinka~\cite{Zel} showed that there are graphs of arbitrarily large minimum degree with total domatic number one.
Regular graphs behave much better.
Chen, Kim, Tait, and Verstra\"ete~\cite{CKTV} showed that a $k$-regular graph has total domatic number at least $(1-o(1))k/\ln k$ as $k\to\infty$, and a theorem of Thomassen~\cite{Tho} on $k$-uniform $k$-regular hypergraphs implies that $\dt(G)\ge2$ for every $k$-regular graph $G$ with $k\ge4$.
For cubic graphs this fails.
The Heawood graph $\Hw$ given in Figure~\ref{fig:heawood} has $\dt(\Hw)=1$, and there are infinitely many connected cubic graphs with total domatic number one (see~\cite{Akb18,Akb24,CKP,DHH} for the total domatic number of cubic graphs).


\begin{figure}[b!]
\centering
\begin{tikzpicture}[scale=0.9,
 pt/.style={circle,fill,inner sep=1.6pt},
 ln/.style={circle,draw,fill=black,inner sep=1.6pt},
 lab/.style={font=\small}]
\foreach \i/\lbl in {0/p_1,1/\ell_{123},2/p_2,3/\ell_{246},4/p_4,5/\ell_{145},6/p_5,7/\ell_{257},8/p_7,9/\ell_{347},10/p_3,11/\ell_{356},12/p_6,13/\ell_{167}}{
 \pgfmathsetmacro{\ang}{90-\i*360/14}
 \ifodd\i
   \node[ln] (h\i) at (\ang:2.4) {};
 \else
   \node[pt] (h\i) at (\ang:2.4) {};
 \fi
 \node[lab] at (\ang:2.9) {$\lbl$};
}
\foreach \i in {0,...,13}{
 \pgfmathtruncatemacro{\j}{mod(\i+1,14)}
 \draw (h\i) -- (h\j);
}
\foreach \i in {0,2,4,6,8,10,12}{
 \pgfmathtruncatemacro{\j}{mod(\i+5,14)}
 \draw (h\i) -- (h\j);
}
\foreach \i in {1,3,5,7,9,11,13}{\node[ln] at (h\i) {};}
\end{tikzpicture}
\caption{The Heawood graph $\Hw$ (see Section~\ref{sec:heawood}).}
\label{fig:heawood}
\end{figure}

Goddard and Henning~\cite{GH} proposed a fractional version.
A \emph{thoroughly dispersed family} of $G$ is a family $\mathcal F$ of (not necessarily distinct) total dominating sets of $G$.
Let $r_{\mathcal F}$ be the largest number of members of $\mathcal F$ that contain a common vertex.
The \emph{fractional total domatic number} of $G$ is
\[
  \FTD(G)=\sup_{\mathcal F}\ |\mathcal F|/r_{\mathcal F}.
\]
Clearly $\dt(G)\le\FTD(G)$.
Henning and Yeo~\cite{HY17} proved that $\FTD(G)\ge2$ for every connected cubic graph $G$. Note that the ratio $2$ cannot be increased in general, since every total dominating set of $K_4$ has at least two of its four vertices, so $\FTD(K_4)=2$.
Goddard and Henning also posed the following stronger conjecture.

\begin{conjecture}[{\cite[Conjecture 38]{GH}}]
\label{conj:GH}
If $G$ is a connected cubic graph, then $G$ has a thoroughly dispersed family of four sets such that every vertex is in at most two of these.
\end{conjecture}

More generally, they asked the following. A \emph{transversal} of a hypergraph $H$ is a set of vertices that intersects every hyperedge of $H$. 
The open neighborhoods of a cubic graph, counted with multiplicity, form a $3$-uniform $3$-regular hypergraph.
The transversals of this hypergraph are exactly the total dominating sets of the original graph.
Thus, the following question generalizes Conjecture~\ref{conj:GH}.

\begin{question}[{\cite[Question 39]{GH}}]\label{q:GH}
Does every $3$-uniform $3$-regular hypergraph have four transversals with each vertex in at most two?
\end{question}

In the final section of~\cite{GH} the authors listed Conjecture~\ref{conj:GH} as one of the two open problems they found most frustrating.
The conjecture is easy when $G$ is a cubic graph with $\dt(G)\ge2$, since one can take two disjoint total dominating sets and use each of them twice.
So the difficulty lies in the cubic graphs with total domatic number one.
We prove the conjecture.

\begin{theorem}\label{thm:main}
Every cubic graph has four total dominating sets such that every vertex belongs to exactly two of them.
\end{theorem}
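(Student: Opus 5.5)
Throughout, I encode the four sets by labelling each vertex $v$ with the $2$-subset $\phi(v)\in\bin$ of indices of the sets containing it; every vertex is then in exactly two sets. Then $T_i=\{v:i\in\phi(v)\}$ is total dominating for all $i$ if and only if, for every $v$, the three labels on $N_G(v)$ have union $[4]$. As noted in the introduction, the open neighbourhoods (with multiplicity) form a $3$-uniform $3$-regular hypergraph $H_G$ on $V(G)$. So it suffices to prove the more general statement answering Question~\ref{q:GH}: \emph{every $3$-uniform $3$-regular hypergraph $H$ has a labelling $\phi:V(H)\to\bin$ such that every hyperedge sees all of $[4]$.} I would prove this form and specialise to $H_G$ at the end.

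\textbf{Step 1 (reduction to components and $2$-colourings).} The condition is local to hyperedges, so I treat each connected component of $H$ separately. If a component $C$ has a proper $2$-colouring (no monochromatic hyperedge) with classes $R,B$, I label $R$ by $\{1,2\}$ and $B$ by $\{3,4\}$. Every hyperedge then meets both classes and so contains complementary labels, which gives union $[4]$. This is the easy case $\dt(G)\ge2$ mentioned in the introduction.

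\textbf{Step 2 (the Fano plane).} Next I handle a component isomorphic to the Fano plane, that is, $7$ points, $7$ lines, with each point on $3$ lines and each pair of points on exactly one line. This is the case of the Heawood graph, where $H_{\Hw}$ is two disjoint Fano planes. I would give an explicit labelling: pick one line $\ell=\{a,b,c\}$, label $a,b,c$ by $12,13,23$, and label the four points off $\ell$ by pairs containing $4$. Each line other than $\ell$ meets $\ell$ in exactly one point and has two points off $\ell$, so I must choose the $4$-labels so that every such line covers $[4]$. This is a small finite check. Line $\ell$ itself has union $\{1,2,3\}$, which misses $4$. So this particular attempt fails on $\ell$, and I would redo the choice, for instance labelling $\ell$ by $12,34,13$ and searching the remaining points by hand. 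Since the Fano plane has a large automorphism group, a short case analysis (or an exhibited labelling, checked line by line) should settle it.

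\textbf{Step 3 (the main obstacle).} Everything then reduces to the structural claim that every connected $3$-uniform $3$-regular hypergraph other than the Fano plane is $2$-colourable. Note that multiplicities can occur, and a repeated edge only makes $2$-colouring easier to verify. This is the $k=3$ analogue of Thomassen's theorem~\cite{Tho} for $k\ge4$, and I expect it to be the hard step. My first attempt would go via Seymour's theorem on minimally non-$2$-colourable hypergraphs: such a hypergraph has at least as many edges as vertices. In a $3$-uniform $3$-regular $H$, a minimal non-$2$-colourable sub-hypergraph $F$ with vertex set $U$ has $|E(F)|\ge|U|$. Meanwhile, each edge of $F$ is a $3$-subset of $U$ and each vertex of $U$ lies in at most $3$ edges, so $3|E(F)|\le3|U|$. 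Hence equality holds, $F$ is $3$-regular on $U$, and $F$ is a whole component. I would then use the characterisation of the equality case (critical hypergraphs with $|E|=|V|$, whose incidence structure forces every two edges to meet) to show that a $3$-regular $3$-uniform intersecting hypergraph with $|E|=|V|$ is the Fano plane: $7$ edges pairwise meeting in a point. If that equality characterisation is not available in usable form, my fallback is a minimal-counterexample argument on $H$: show that a vertex of a non-$2$-colourable component lies in hyperedges that pairwise intersect, and count to force $|V|=7$ with Fano incidences.

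\textbf{Step 4 (conclusion).} Combining Steps 1–3 over the components of $H_G$ yields the labelling, and hence the four total dominating sets with every vertex in exactly two of them. Since Step 3 is stated for arbitrary $3$-uniform $3$-regular hypergraphs, the same argument answers Question~\ref{q:GH} affirmatively.
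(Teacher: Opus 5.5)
Your Step~3 is false, and it is where the whole difficulty lies. Connected $3$-uniform $3$-regular hypergraphs that are not $2$-colourable form an infinite family, of which the Fano plane is only one member. This is the hypergraph form of the remark in the introduction that infinitely many connected cubic graphs have $\dt=1$.

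Here is an explicit example. Take two Fano planes $F_1,F_2$, delete a line $\{s_1,s_2,s_3\}$ from $F_1$, and delete a point $x$ from $F_2$. Replace each of the three lines $\{x,a_i,b_i\}$ of $F_2$ through $x$ by $\{a_i,b_i,s_i\}$. The result $H$ is a connected $3$-uniform $3$-regular hypergraph on $13$ vertices. It has disjoint edges, since a line of $F_2$ avoiding $x$ misses every edge coming from $F_1$.

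$H$ is not $2$-colourable. Take a proper red/blue colouring of $H$ and colour $x$ red. Since $F_2$ is not $2$-colourable and its lines avoiding $x$ are edges of $H$, some pair $\{a_i,b_i\}$ is red, so $s_i$ is blue. Colouring $x$ blue instead produces some red $s_j$. Thus $\{s_1,s_2,s_3\}$ is not monochromatic, and $F_1$ would be properly $2$-coloured, a contradiction. Repeating the substitution gives arbitrarily large examples.

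Your Seymour count is correct, and it shows that $H$ is minimally non-$2$-colourable with $|E|=|V|$. So the equality case of Seymour's bound does not single out the Fano plane. Neither your claimed intersecting characterisation nor your counting fallback can succeed, and Steps~1--2 leave infinitely many components unlabelled.

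The missing ingredient is a way to label these non-$2$-colourable components directly. The paper passes to connected cubic bipartite graphs (the incidence graph, or the bipartite double cover via Proposition~\ref{prop:double}) and takes a minimal counterexample. McCuaig's theorem (Theorem~\ref{thm:mccuaig}) gives an induced $\Hw-x$ joined to the rest by three edges $t_is_i$; this is precisely the $F_2$-gadget above. The paper then does three things:
\begin{enumerate}
\item[(1)] It replaces this piece by one vertex adjacent to $s_1,s_2,s_3$, or by $K_{3,3}-pq$ when $s_1=s_2$.
\item[(2)] It colours the smaller graph by minimality.
\item[(3)] It extends the colouring using Lemmas~\ref{lem:strong} and~\ref{lem:minus}, which colour $\Hw-x$ and $\Hw-e$ with prescribed colours at the attachment vertices.
\end{enumerate}
For $H$ above, this amounts to labelling $F_1$ with the line $\{s_1,s_2,s_3\}$ restored, and then labelling $F_2-x$ by Lemma~\ref{lem:fano}, so that each pair $\{a_i,b_i\}$ already covers $[4]$.

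Your Step~2 is also unfinished, but that part is easy. The labelling of Lemma~\ref{lem:fano}, with $p_7$ labelled arbitrarily, works for the Fano plane.
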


Theorem~\ref{thm:main} also answers Question~\ref{q:GH} in the affirmative.

\begin{corollary}\label{cor:hyper}
Every $3$-uniform $3$-regular hypergraph has four transversals such that every vertex belongs to exactly two of them.
\end{corollary}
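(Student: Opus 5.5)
The corollary should follow from Theorem~\ref{thm:main} applied to the incidence graph of the hypergraph, rather than to a graph whose neighborhoods give the hypergraph. Let $H$ be a $3$-uniform $3$-regular hypergraph, with hyperedges counted with multiplicity. Let $B$ be its bipartite incidence graph. Its vertex set is $V(H)\cup E(H)$, where each copy of a repeated hyperedge gets its own vertex, and $v\in V(H)$ is adjacent to $e\in E(H)$ exactly when $v\in e$.

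First, $B$ is a simple cubic graph. It is simple because each hyperedge is a set of three distinct vertices, so $v$ and $e$ are joined by at most one edge, and $B$ is bipartite. Each $e$ has degree $3$ by $3$-uniformity, and each $v$ has degree $3$ by $3$-regularity. Theorem~\ref{thm:main} needs neither connectedness nor any other hypothesis on $B$.

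Next, apply Theorem~\ref{thm:main} to $B$. This gives total dominating sets $T_1,\dots,T_4$ of $B$ such that every vertex of $B$ lies in exactly two of them. Put $S_i=T_i\cap V(H)$ for $i=1,\dots,4$. Fix a hyperedge $e$. Since $T_i$ is totally dominating, the vertex $e$ of $B$ has a neighbor in $T_i$. All neighbors of $e$ lie in $V(H)$ and are precisely the elements of $e$, so $S_i\cap e\neq\emptyset$. Hence each $S_i$ is a transversal of $H$. A vertex $v\in V(H)$ lies in $S_i$ exactly when it lies in $T_i$, so $v$ belongs to exactly two of the $S_i$.

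There is no real obstacle once Theorem~\ref{thm:main} is available. The only point needing care is that the incidence graph must be a simple graph, so that the theorem applies. This is why we use $B$, which stays simple even when $H$ has repeated hyperedges. By contrast, the neighborhood hypergraph of a cubic graph is only one special kind of $3$-uniform $3$-regular hypergraph, so the general case cannot be reduced to a cubic graph in that direct way.
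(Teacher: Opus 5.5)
Your proof is correct and follows the paper's argument: both apply Theorem~\ref{thm:main} to the incidence graph of $H$, with repeated hyperedges as separate vertices so that it is a simple cubic graph, and then restrict to $V(H)$. The only cosmetic difference is that the paper passes through the $(4{:}2)$-coloring of Proposition~\ref{prop:code}, whereas you intersect the total dominating sets with $V(H)$ directly, which is equivalent.
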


Theorem~\ref{thm:main} yields the lower bound $\FTD(G)\ge2$ of Henning and Yeo~\cite{HY17}, and the bound can always be realized by a family of four total dominating sets. 


\section{Preliminaries}

\subsection{Basic observations}\label{sec:prelim}

Throughout the paper, $[4]=\{1,2,3,4\}$ and $\bin$ is the set of the six $2$-element subsets of $[4]$, which we call \emph{colors}.
For a color $\pi$, let $\overline{\pi}=[4]\setminus\pi$ be the complementary color.

A \emph{$(4{:}2)$-coloring} of a graph $G$ without isolated vertices is a map $c:V(G)\to\bin$ such that
\[
  \bigcup_{u\in N_G(v)}c(u)=[4]\qquad\text{for every } v\in V(G).
\]
We then say that $v$ \emph{sees all four colors}.

\begin{proposition}\label{prop:code}
Let $G$ be a graph without isolated vertices. The following are equivalent.
\begin{enumerate}
\item[\textup{(i)}] $G$ has four total dominating sets such that every vertex is in at most two of them.
\item[\textup{(ii)}] $G$ has four total dominating sets such that every vertex is in exactly two of them.
\item[\textup{(iii)}] $G$ has a $(4{:}2)$-coloring.
\end{enumerate}
\end{proposition}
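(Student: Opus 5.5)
I would prove the cycle of implications (ii)$\Rightarrow$(i)$\Rightarrow$(iii)$\Rightarrow$(ii). The implication (ii)$\Rightarrow$(i) is trivial, since ``exactly two'' implies ``at most two''.

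For (iii)$\Rightarrow$(ii), given a $(4{:}2)$-coloring $c$, I would set $T_i=\{v: i\in c(v)\}$ for $i\in[4]$. Each vertex $v$ lies in exactly the two sets indexed by the elements of $c(v)$. Each $T_i$ is total dominating: for any $v$, since $\bigcup_{u\in N_G(v)}c(u)=[4]\ni i$, some neighbor $u$ of $v$ has $i\in c(u)$, that is, $u\in T_i$.

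For (i)$\Rightarrow$(iii), the only real step is enlarging the memberships. Let $T_1,\dots,T_4$ be total dominating sets with each vertex in at most two of them, and let $S(v)=\{i: v\in T_i\}$, so $|S(v)|\le2$. For each $v$, choose any color $c(v)\in\bin$ with $S(v)\subseteq c(v)$; this is possible since $|S(v)|\le 2\le 4$. Equivalently, replace each $T_i$ by the superset $T_i'=\{v: i\in c(v)\}\supseteq T_i$. Supersets of total dominating sets are total dominating. So for every $v$ and every $i$, some neighbor $u$ of $v$ lies in $T_i\subseteq T_i'$, whence $i\in c(u)$. Thus $v$ sees all four colors, and $c$ is a $(4{:}2)$-coloring.

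The hypothesis that $G$ has no isolated vertices is used only so that total dominating sets exist and the definition of a $(4{:}2)$-coloring makes sense. I expect no real obstacle; the one point needing care is the padding step, and monotonicity of total domination under taking supersets settles it.
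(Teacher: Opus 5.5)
Your proof is correct and follows the paper's argument exactly. You use the same cycle (ii)$\Rightarrow$(i)$\Rightarrow$(iii)$\Rightarrow$(ii), padding each membership set $\{i: v\in T_i\}$ to a color containing it, and recovering the four total dominating sets as $T_i=\{v: i\in c(v)\}$.
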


\begin{proof}
Clearly (ii) implies (i).

To see that (i) implies (iii), let $T_1,\dots,T_4$ be as in (i). For each $v\in V(G)$, let $I(v)=\{i:v\in T_i\}$, and choose $c(v)\in\bin$ with $c(v)\supseteq I(v)$, which is possible since $|I(v)|\le2$. For $v\in V(G)$ and $i\in[4]$, the vertex $v$ has a neighbor $u\in T_i$, and then $i\in I(u)\subseteq c(u)$. Hence $c$ is a $(4{:}2)$-coloring.

To see that (iii) implies (ii), let $c$ be a $(4{:}2)$-coloring  of $G$, and let $T_i=\{v:i\in c(v)\}$ for $i\in[4]$. Every vertex lies in exactly two of these sets. For $v\in V(G)$ and $i\in[4]$, the vertex $v$ has a neighbor $u$ with $i\in c(u)$, that is, $u\in T_i$. Hence each $T_i$ is a total dominating set.
\end{proof}

\begin{proposition}\label{prop:easy}
If $G$ is a graph with $\dt(G)\ge2$, then $G$ has a $(4{:}2)$-coloring.
\end{proposition}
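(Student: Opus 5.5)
Since $\dt(G)\ge 2$, we can fix two disjoint total dominating sets $D_1$ and $D_2$ of $G$. The plan is to build four total dominating sets in which every vertex lies in at most two, and then invoke Proposition~\ref{prop:code}, which upgrades such a family to a $(4{:}2)$-coloring.

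The natural family is $T_1=T_2=D_1$ and $T_3=T_4=D_2$. Each $T_i$ is a total dominating set. Because $D_1\cap D_2=\emptyset$, a vertex in $D_1$ lies in exactly $T_1$ and $T_2$. A vertex in $D_2$ lies in exactly $T_3$ and $T_4$. Any other vertex lies in none of them. So every vertex belongs to at most two of the sets, condition~(i) of Proposition~\ref{prop:code} holds, and (iii) follows.

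The coloring can also be written down directly, which is perhaps cleaner. Set $c(v)=\{1,2\}$ for $v\in D_1$ and $c(v)=\{3,4\}$ for every other vertex $v$, which includes all of $D_2$. Now take any vertex $v$. It has a neighbor in $D_1$, which contributes the colors $1$ and $2$. It also has a neighbor in $D_2$, which is disjoint from $D_1$ and so contributes $3$ and $4$. Hence $v$ sees all four colors.

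One hypothesis needs checking: $G$ has no isolated vertices. This is automatic, because a total dominating set exists, so every vertex has a neighbor. There is no real obstacle in this argument.
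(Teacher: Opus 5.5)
Your proof is correct, and your direct coloring ($\{1,2\}$ on $D_1$, $\{3,4\}$ on all other vertices) is exactly the paper's argument, which phrases it as choosing total dominating sets $T$ and $V(G)\setminus T$. Your alternative route through Proposition~\ref{prop:code}, using $D_1$ and $D_2$ twice each, is also valid and matches the remark in the paper's introduction.
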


\begin{proof}
Let $T$ and $V(G)\setminus T$ be total dominating sets.
Color the vertices of $T$ with $\{1,2\}$ and the other vertices with $\{3,4\}$.
\end{proof}

The \emph{bipartite double cover} $B(G)$ of a graph $G$ has vertex set $V(G)\times\{1,2\}$, and $(u,1)$ is adjacent to $(v,2)$ whenever $uv\in E(G)$. If $G$ is cubic, then so is $B(G)$, and it is easy to check that $B(G)$ is connected if $G$ is connected and non-bipartite.

\begin{proposition}\label{prop:double}
A graph $G$ without isolated vertices has a $(4{:}2)$-coloring if and only if $B(G)$ has one.
\end{proposition}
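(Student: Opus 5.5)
We prove the two directions separately, each by transporting a coloring along the natural maps between $G$ and $B(G)$. Note first that $B(G)$ has no isolated vertices whenever $G$ has none, since $(v,i)$ has degree $\deg_G(v)$ in $B(G)$. The key structural fact is that for each vertex $(v,i)$ of $B(G)$,
\[
N_{B(G)}((v,i))=N_G(v)\times\{3-i\},
\]
so the projection $p:(u,j)\mapsto u$ maps the neighborhood of $(v,i)$ bijectively onto $N_G(v)$.

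For the forward direction, let $c$ be a $(4{:}2)$-coloring of $G$ and define $c'((u,j))=c(u)$. Then for any $(v,i)$ we have
\[
\bigcup_{w\in N_{B(G)}((v,i))}c'(w)=\bigcup_{u\in N_G(v)}c(u)=[4],
\]
so $c'$ is a $(4{:}2)$-coloring of $B(G)$.

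The converse is where the only real choice lies, since a coloring of $B(G)$ need not be constant on the fibres $\{(u,1),(u,2)\}$. Let $c'$ be a $(4{:}2)$-coloring of $B(G)$. We define $c(u)=c'((u,2))$, using only the second copy. Then for each $v\in V(G)$ we use the vertex $(v,1)$, whose neighbors are exactly the vertices $(u,2)$ with $u\in N_G(v)$. Hence
\[
\bigcup_{u\in N_G(v)}c(u)=\bigcup_{u\in N_G(v)}c'((u,2))=\bigcup_{w\in N_{B(G)}((v,1))}c'(w)=[4].
\]
So $c$ is a $(4{:}2)$-coloring of $G$.

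No step is a serious obstacle. The one point needing care is the converse, where one must take the restriction to a single side of the bipartition rather than try to symmetrize $c'$. This works because the neighborhoods of all vertices $(v,1)$ lie entirely within $V(G)\times\{2\}$, so the coloring on side $2$ alone already witnesses the condition for every $v\in V(G)$.
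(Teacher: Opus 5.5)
Your proof is correct and is essentially the paper's argument: you lift $c$ to both copies for the forward direction, and for the converse you read off the colors on $V(G)\times\{2\}$, using that the neighborhood of $(v,1)$ is exactly $N_G(v)\times\{2\}$. Your remark that $B(G)$ has no isolated vertices, so that a $(4{:}2)$-coloring of $B(G)$ is defined, is a small detail the paper leaves implicit.
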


\begin{proof}
Write $u_i=(u,i)$.
Let $c$ be a $(4{:}2)$-coloring of $G$, and let $\tilde c(u_i)=c(u)$.
Since $N_{B(G)}(u_1)=\{v_2:v\in N_G(u)\}$, the colors seen by $u_1$ are the colors seen by $u$ in $G$, and the same holds for $u_2$.
So $\tilde c$ is a $(4{:}2)$-coloring of $B(G)$.
Conversely, let $\tilde c$ be a $(4{:}2)$-coloring of $B(G)$, and let $c(u)=\tilde c(u_2)$.
Then the colors seen by $u$ in $G$ are the colors seen by $u_1$ in $B(G)$.
So $c$ is a $(4{:}2)$-coloring of $G$.
\end{proof}


A connected bipartite graph with a perfect matching is \emph{$1$-extendable} if every edge lies in a perfect matching.
By K\H{o}nig's edge coloring theorem, a cubic bipartite graph is the union of three perfect matchings, so every connected cubic bipartite graph is $1$-extendable.
We use the following structure theorem of McCuaig~\cite{McC00}.

\begin{theorem}[{\cite[Theorems~22 and 27]{McC00}}]\label{thm:mccuaig}
Let $G$ be a connected $1$-extendable bipartite graph with minimum degree at least $3$, and assume that $\dt(G)=1$.
Then there is a set $A\subsetneq V(G)$ such that $G[A]\cong\Hw-x$ for a vertex $x$ of the Heawood graph $\Hw$.
\end{theorem}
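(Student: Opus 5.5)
The statement to be proved is McCuaig's structure theorem, Theorem~\ref{thm:mccuaig}, which the paper imports from~\cite{McC00}; the plan below reconstructs how one would argue it in the setting of this paper. The starting point is the standard equivalence for a bipartite graph $G$ with colour classes $X,Y$. Since every neighborhood of a vertex of $Y$ lies in $X$, a partition of $V(G)$ into two total dominating sets is exactly a pair of proper $2$-colorings of two hypergraphs. The first is $H_X$, on vertex set $X$, with hyperedges $N(y)$ for $y\in Y$. The second is $H_Y$, defined symmetrically. So $\dt(G)=1$ means that one of $H_X$, $H_Y$ is not $2$-colorable, that is, not \emph{bipartite} in the hypergraph sense. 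With minimum degree at least $3$, every hyperedge has size at least $3$. We may then reformulate via the bipartite incidence digraph: orient every edge of a perfect matching $M$ from $X$ to $Y$ and all other edges from $Y$ to $X$. Then $1$-extendability together with $\dt(G)=1$ becomes a statement about even cycles, in the spirit of the Robertson--Seymour--Thomas and McCuaig characterization of Pfaffian bipartite graphs and of $2$-colorability of minimally non-$2$-colorable hypergraphs (Seymour's theorem on minimal non-bicolorable hypergraphs).

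The key steps, in order, are as follows. (1) Reduce to a minimal counterexample structure. Take an inclusion-minimal set of hyperedges of $H_X$ that is not $2$-colorable. By Seymour's result, such a minimal non-$2$-colorable hypergraph has at least as many edges as vertices, and in the critical case the incidence bipartite graph has a perfect matching. (2) Use $1$-extendability and minimum degree $3$ to show that this critical sub-hypergraph has all edges of size exactly $3$ and is square ($|E|=|V|$). Hence its incidence graph is a cubic-like bipartite brace $K$ sitting inside $G$. (3) Invoke the brace decomposition of McCuaig: a brace with no $2$-coloring property of this kind is either the Heawood graph or is built from smaller such braces by ``trisums'' and edge additions. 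The claim is that it contains a Fano-type substructure, namely the Fano plane minus a point. Its incidence graph is $\Hw-x$. (4) Conclude that $A$, the vertex set of this substructure, satisfies $G[A]\cong\Hw-x$. Here we use that induced edges correspond exactly to incidences, because $G$ is bipartite and hyperedges are neighborhoods. We also need $A\ne V(G)$, which holds since $|A|=13$ is odd while a graph with a perfect matching has an even number of vertices.

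The main obstacle is step (3): showing that every minimally non-$2$-colorable $3$-uniform configuration arising this way contains a punctured Fano plane. Here one genuinely needs McCuaig's brace generation theorem, and I would try to argue by induction along his generation sequence of braces. At each step we check that non-$2$-colorability is inherited by one of the pieces, using that a $2$-coloring of the pieces glues across a $3$-separation. The base case is that the only small non-bicolorable square $3$-uniform configuration is the Fano plane, and deleting a point gives $\Hw-x$ inside $G$.
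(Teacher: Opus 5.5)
\textbf{There is a genuine gap: step (2) is false, and step (3) is not an argument.} Your reformulation via $H_X,H_Y$ and the parity argument for $A\neq V(G)$ are fine.

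First, step (1) reduces nothing. Let $E'\subseteq Y$ index a minimal non-$2$-colorable subfamily of $H_X$. Seymour's inequality gives $|E'|\ge|N_G(E')|$. A connected $1$-extendable bipartite graph satisfies the strict Hall condition $|N_G(S)|\ge|S|+1$ for every nonempty $S\subsetneq Y$. Hence $E'=Y$: the critical hypergraph is $H_X$ itself, and its incidence graph is $G$.

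So step (2) really claims two things: that every vertex of $Y$ has degree $3$, and that $G$ is a brace. The first is not among the hypotheses. The second is false. Take $\Hw-x$ for a point $x$ and a disjoint copy of $\Hw-y$ for a line $y$, and join the three lines through $x$ to the three points of $y$ by a matching.
\begin{itemize}
\item The result is connected, cubic and bipartite, hence $1$-extendable.
\item It has $\dt=1$. A $2$-coloring of the points with no monochromatic neighborhood properly colors every line of the second Fano plane except $y$, so it must make $y$ monochromatic. Giving $x$ that same color then properly $2$-colors the first Fano plane, which is impossible.
\item It is not a brace. The three cross edges form a tight cut: the $6$ points of the first copy use $6$ of its $7$ lines, so exactly one line is matched across. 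Thus no perfect matching contains two cross edges.
\end{itemize}
This is exactly the situation in which the conclusion $A\subsetneq V(G)$ has content. There the induced $\Hw-x$ is a shore of a tight cut, and nothing in your plan locates tight cuts.

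Second, step (3) defers the entire content of the theorem. The trisum decomposition belongs to the characterization of Pfaffian braces, not of braces with $\dt=1$. McCuaig's brace generation proceeds by specific expansion operations, not by gluing along $3$-separations. Neither the inheritance of non-$2$-colorability along a generation sequence nor your base case is proved.

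The paper does not reprove any of this. It uses connectivity and $1$-extendability to translate $\dt(G)=1$ into McCuaig's notion of an unbalanced graph (Theorem 3 of the cited RRT paper). It then applies two of McCuaig's results:
\begin{itemize}
\item Theorem 22: an unbalanced brace is $\Hw$, so $A=V(G)\setminus\{x\}$ works.
\item Theorem 27: otherwise some tight-cut reduction of $G$ is isomorphic to $\Hw$, and deleting its contracted vertex leaves an induced $\Hw-x$ in $G$.
\end{itemize}
A proof along your lines would have to supply both results, in particular the reduction step for non-braces, rather than look for a sub-brace inside $G$.
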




McCuaig~\cite{McC00} states his results in terms of unbalanced graphs.
Since $G$ is connected and $1$-extendable, $\dt(G)=1$ implies that $G$ is unbalanced by~\cite[Theorem~3]{RRT}; see also~\cite[Theorem~13]{McC04}.
If $G$ is a brace, meaning that every pair of disjoint edges belongs to a common perfect matching, then $G\cong \Hw$ by~\cite[Theorem~22]{McC00}.
Otherwise, \cite[Theorem~27]{McC00} gives a reduction isomorphic to $\Hw$.
Deleting the new vertex of that reduction leaves an induced copy of $\Hw-x$ in $G$.

\subsection{A $(4{:}2)$-coloring of the Heawood graph}\label{sec:heawood}

The \emph{incidence graph} of a hypergraph is the bipartite graph whose vertices are the vertices and the edges of the hypergraph, with a vertex adjacent to the edges containing it.
The \emph{Fano plane} $F_7=(P,\mathcal{L})$ is the $3$-uniform $3$-regular hypergraph with vertex set $P=\{p_1,\dots,p_7\}$ and edge set
\[
  \mathcal{L}=\{\ell_{123},\,\ell_{145},\,\ell_{167},\,\ell_{246},\,\ell_{257},\,\ell_{347},\,\ell_{356}\},
\]
where we write $\ell_{ijk}$ for $\{p_i,p_j,p_k\}$.
Its vertices are called \emph{points} and its edges are called \emph{lines}.
Every two distinct points lie on exactly one common line, and the automorphism group of $F_7$ acts transitively on points.
The Heawood graph $\Hw$ is the incidence graph of $F_7$, see Figure~\ref{fig:heawood}.
It is well known that $\dt(\Hw)=1$, since $F_7$ has no $2$-coloring without a monochromatic line.

The Fano plane is \emph{self-dual}, that is, there is a bijection $\delta\colon\mathcal{L}\to P$ such that, for each point $q$, the images under $\delta$ of the three lines containing $q$ are the three points of a line, which we denote by $\ell_q$.
One may take
\[
  \delta(\ell_{123})=p_1,\ \delta(\ell_{145})=p_6,\ \delta(\ell_{167})=p_7,\ \delta(\ell_{246})=p_2,\ \delta(\ell_{257})=p_3,\ \delta(\ell_{347})=p_4,\ \delta(\ell_{356})=p_5.
\]
Composing $\delta$ with an automorphism of $F_7$ gives another such bijection, so for any line $y$ and any point $x$ we may choose $\delta$ with $\delta(y)=x$.
Moreover, the map sending each line $\ell$ to $\delta(\ell)$ and each point $q$ to $\ell_q$ is an automorphism of $\Hw$ that exchanges points and lines, since $q\in\ell$ if and only if $\delta(\ell)\in\ell_q$.

\begin{lemma}\label{lem:fano}
Let $F_7=(P,\mathcal{L})$ be the Fano plane and let $p \in P$. Then there exists a map $c \colon P \setminus \{p\} \to \binom{[4]}{2}$ such that
\[ \bigcup_{q \in \ell \setminus \{p\}} c(q) = [4]  \qquad \text{for every } \ell \in \mathcal{L}.\]
\end{lemma}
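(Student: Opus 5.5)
By point-transitivity of $F_7$ it suffices to treat one point, so we take $p=p_7$, give an explicit colouring, and check the seven lines. For an arbitrary $p$, precompose the colouring below with an automorphism of $F_7$ that sends $p$ to $p_7$; the condition on lines is preserved because automorphisms map lines to lines.

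The three lines through $p_7$ are $\ell_{167},\ell_{257},\ell_{347}$. After deleting $p_7$ they become the pairs $\{p_1,p_6\}$, $\{p_2,p_5\}$, $\{p_3,p_4\}$. Two colours cover $[4]$ exactly when they are complementary, so each pair must receive a colour $\pi$ and its complement $\overline{\pi}$. The remaining four lines $\ell_{123},\ell_{145},\ell_{246},\ell_{356}$ avoid $p_7$, and each of their three points must together cover $[4]$.

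Since $p_1,p_2,p_3$ lie on a common line, a natural choice is to give them the three colours containing $1$, which automatically cover $[4]$. Then put the complementary colours on their partners:
\[
c(p_1)=\{1,2\},\ c(p_6)=\{3,4\},\quad c(p_2)=\{1,3\},\ c(p_5)=\{2,4\},\quad c(p_3)=\{1,4\},\ c(p_4)=\{2,3\}.
\]

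By construction the three lines through $p_7$ are satisfied. For the other four lines:
\begin{itemize}
\item $\ell_{123}$: $\{1,2\}\cup\{1,3\}\cup\{1,4\}=[4]$.
\item $\ell_{145}$: $\{1,2\}\cup\{2,3\}\cup\{2,4\}=[4]$.
\item $\ell_{246}$: $\{1,3\}\cup\{2,3\}\cup\{3,4\}=[4]$.
\item $\ell_{356}$: $\{1,4\}\cup\{2,4\}\cup\{3,4\}=[4]$.
\end{itemize}
All seven lines see all four colours, which proves the lemma.

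The only real difficulty was finding a colouring that works; once the pairing forced by the lines through $p$ is noticed, the "star at $1$" choice above settles it. The resulting pattern is that each line missing $p_7$ receives the three colours containing one fixed element of $[4]$.
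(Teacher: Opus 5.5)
Your proof is correct and is essentially the paper's argument: reduce to $p=p_7$ by point-transitivity and exhibit exactly the same colouring $c(p_1)=\{1,2\},\ c(p_2)=\{1,3\},\ c(p_3)=\{1,4\},\ c(p_4)=\{2,3\},\ c(p_5)=\{2,4\},\ c(p_6)=\{3,4\}$. Your observation that the lines through $p_7$ force complementary pairs, your explicit transport of the colouring via an automorphism, and your line-by-line check simply spell out what the paper leaves as ``easily checked.''
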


\begin{proof}
Since the automorphism group of the Fano plane acts transitively on points, we may assume $p=p_7$. The coloring
\[
  c(p_1)=\{1,2\},\ c(p_2)=\{1,3\},\ c(p_3)=\{1,4\},\ c(p_4)=\{2,3\},\ c(p_5)=\{2,4\},\ c(p_6)=\{3,4\}
\]
has the required property, as is easily checked.
\end{proof}

\begin{lemma}\label{lem:strong}
Let $x$ be a vertex of the Heawood graph $\Hw$ and let $\pi\in\bin$.
Then $\Hw-x$ has a $(4{:}2)$-coloring $c$ with $c(w)=\pi$ for every $w\in N_{\Hw}(x)$.
\end{lemma}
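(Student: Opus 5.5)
The plan is to reduce to the case where $x$ is a point and then write down an explicit coloring built from Lemma~\ref{lem:fano}. First, the map described above (each line $\ell\mapsto\delta(\ell)$, each point $q\mapsto\ell_q$) is an automorphism of $\Hw$ exchanging points and lines. An automorphism $\sigma$ with $\sigma(x)=x'$ carries $\Hw-x$ to $\Hw-x'$ and $N_{\Hw}(x)$ to $N_{\Hw}(x')$, and it preserves the property of being a $(4{:}2)$-coloring that is constant $\pi$ on the neighborhood. So if $x$ is a line, we may replace it by the point $\delta(x)$. Hence we may assume $x=p$ is a point of $F_7$. Then $N_{\Hw}(p)$ consists of the three lines through $p$.

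Define $c$ on $\Hw-p$ as follows.
\begin{enumerate}
\item[(a)] On the points $P\setminus\{p\}$, take the map given by Lemma~\ref{lem:fano} for this $p$.
\item[(b)] Give every line through $p$ the color $\pi$.
\item[(c)] Give every one of the four lines not through $p$ the color $\overline{\pi}$.
\end{enumerate}
Condition (b) is exactly the required boundary condition.

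We then check that every vertex of $\Hw-p$ sees all four colors.

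\emph{Points.} Let $q\neq p$ be a point. Its neighbors in $\Hw-p$ are the three lines through $q$. Since two points lie on exactly one common line, exactly one of these lines, namely the line through $p$ and $q$, contains $p$, and it has color $\pi$. The other two lines avoid $p$ and have color $\overline{\pi}$. So $q$ sees $\pi\cup\overline{\pi}=[4]$.

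\emph{Lines.} Let $\ell$ be a line. Its neighbors in $\Hw-p$ are the points of $\ell\setminus\{p\}$. By Lemma~\ref{lem:fano}, the colors of these points have union $[4]$. This holds both when $\ell$ passes through $p$ (two remaining points) and when it does not (three points).

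There is no real obstacle. The only points needing care are that the automorphism reduction legitimately transfers the boundary condition, which it does because automorphisms map neighborhoods to neighborhoods, and that each point $q\ne p$ lies on exactly one line through $p$, which is the Fano-plane axiom. The substantive work is already contained in Lemma~\ref{lem:fano}.
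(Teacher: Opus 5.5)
Your proposal is correct and follows the paper's proof exactly. You reduce to $x$ being a point via the self-duality automorphism, color the points by Lemma~\ref{lem:fano} with $p=x$, give the lines through $x$ the color $\pi$ and the remaining lines $\overline{\pi}$, and check points via the unique line through $x$ and $q$ and lines via Lemma~\ref{lem:fano}.
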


\begin{proof}
By self-duality, we may assume that $x$ is a point.
Color each point $q\ne x$ by the map of Lemma~\ref{lem:fano} with $p=x$, the lines containing $x$ with $\pi$, and the other lines with $\overline{\pi}$.
Each line $\ell$ is adjacent in $\Hw-x$ to all points of $\ell\setminus\{x\}$, whose colors cover $[4]$ by Lemma~\ref{lem:fano}.
Each point $q\ne x$ is adjacent in $\Hw-x$ to the line containing $x$ and $q$, which has color $\pi$, and to the two other lines containing $q$, which have color $\overline{\pi}$.
Since $\pi\cup\overline{\pi}=[4]$, this is a $(4{:}2)$-coloring of $\Hw-x$, and every neighbor of $x$ has color $\pi$ by construction.
\end{proof}

\begin{lemma}\label{lem:minus}
Let $e=xy$ be an edge of the Heawood graph $\Hw$ and let $\pi_1,\pi_2\in\bin$.
Then $\Hw-e$ has a $(4{:}2)$-coloring $c$ with $c(x)=\pi_1$ and $c(y)=\pi_2$.
\end{lemma}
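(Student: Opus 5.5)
The plan is to reduce to a single explicit configuration using the symmetry of $\Hw$, and then write down one coloring in which the colors of $x$ and $y$ are completely unconstrained. The Heawood graph is edge-transitive: the point-stabilizer in $\mathrm{Aut}(F_7)$ acts transitively on the three lines through that point, and the duality automorphism exchanges points and lines. So we may assume $x=p_7$ and $y=\ell_{167}$, and the other case ($x$ a line, $y$ a point) follows by applying the duality automorphism.

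In $\Hw-e$ the vertex $x=p_7$ is adjacent only to $\ell_{257}$ and $\ell_{347}$, and $y=\ell_{167}$ only to $p_1$ and $p_6$. The main obstacle is making the coloring work for \emph{all} pairs $\pi_1,\pi_2$ at once rather than splitting into the three orbit types of $(\pi_1,\pi_2)$ under $S_4$. The idea is to make $x$ and $y$ irrelevant to everyone else's constraints. Each neighbor of $x$ must see $[4]$ without using $c(x)$, and likewise for $y$.

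\textbf{The ansatz.} Choose colors $\sigma,\alpha,\beta,\gamma,\varepsilon,\tau\in\bin$ and set
\[
c(p_1)=\sigma,\quad c(p_6)=\overline{\sigma},\quad c(p_2)=\alpha,\quad c(p_5)=\overline{\alpha},\quad c(p_3)=\beta,\quad c(p_4)=\overline{\beta},
\]
\[
c(\ell_{257})=\tau,\quad c(\ell_{347})=\overline{\tau},\quad c(\ell_{123})=\gamma,\quad c(\ell_{145})=\overline{\gamma},\quad c(\ell_{246})=\varepsilon,\quad c(\ell_{356})=\overline{\varepsilon},
\]
together with $c(x)=\pi_1$ and $c(y)=\pi_2$. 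Complementary pairs were chosen so that every constraint involving $x$ or $y$ is met regardless of $\pi_1,\pi_2$:
\begin{itemize}
\item $x$ sees $\tau\cup\overline{\tau}$.
\item $y$ sees $\sigma\cup\overline{\sigma}$.
\item $\ell_{257}$ sees $p_2,p_5$, so $\alpha\cup\overline{\alpha}$.
\item $\ell_{347}$ sees $p_3,p_4$, so $\beta\cup\overline{\beta}$.
\item $p_1$ sees $\ell_{123},\ell_{145}$, so $\gamma\cup\overline{\gamma}$.
\item $p_6$ sees $\ell_{246},\ell_{356}$, so $\varepsilon\cup\overline{\varepsilon}$.
\end{itemize}

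\textbf{The remaining constraints.} Only the lines $\ell_{123},\ell_{145},\ell_{246},\ell_{356}$ and the points $p_2,p_3,p_4,p_5$ are left. They give
\[
\sigma\cup\alpha\cup\beta,\quad \sigma\cup\overline{\beta}\cup\overline{\alpha},\quad \alpha\cup\overline{\beta}\cup\overline{\sigma},\quad \beta\cup\overline{\alpha}\cup\overline{\sigma}
\]
from the lines, and
\[
\gamma\cup\varepsilon\cup\tau,\quad \gamma\cup\overline{\tau}\cup\overline{\varepsilon},\quad \overline{\gamma}\cup\varepsilon\cup\overline{\tau},\quad \overline{\gamma}\cup\tau\cup\overline{\varepsilon}
\]
from the points. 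Each of these must equal $[4]$. The two systems have the same shape, so it suffices to solve one of them.

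\textbf{Solving the system.} Take $(\sigma,\alpha,\beta)=(\{1,2\},\{1,3\},\{1,4\})$. The four unions are then
\[
\{1,2\}\cup\{1,3\}\cup\{1,4\},\quad \{1,2\}\cup\{2,3\}\cup\{2,4\},\quad \{1,3\}\cup\{2,3\}\cup\{3,4\},\quad \{1,4\}\cup\{2,4\}\cup\{3,4\},
\]
and each equals $[4]$. Taking $(\gamma,\varepsilon,\tau)$ to be the same triple solves the second system, since it has the same shape.

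This yields a $(4{:}2)$-coloring of $\Hw-e$ with $c(x)=\pi_1$ and $c(y)=\pi_2$ for arbitrary $\pi_1,\pi_2$. The only remaining work is the routine check, which I would do carefully, that each of the $14$ vertices has exactly the listed neighborhood in $\Hw-e$.
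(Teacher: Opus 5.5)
Your proof is correct. For $x=p_7$ and $y=\ell_{167}$, every neighborhood you list in $\Hw-e$ is right, and both residual systems have the same shape: an even number of complemented colors in each union. The triple $(\{1,2\},\{1,3\},\{1,4\})$ solves both.

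The paper justifies the lemma more structurally. It uses self-duality only to make $x$ a point. It then colors the points $q\neq x$ by the map $c_0$ of Lemma~\ref{lem:fano} with $p=x$, and each line $\ell\neq y$ by $c_0(\delta(\ell))$, where $\delta$ is a duality with $\delta(y)=x$. Each line then sees the colors of $\ell\setminus\{x\}$, and each point $q$ sees the $c_0$-colors of $\ell_q\setminus\{x\}$. Both cover $[4]$ by Lemma~\ref{lem:fano}, with no computation and no need to normalize the edge.

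In fact the two colorings are identical. For $x=p_7$, your point colors are exactly the coloring in the proof of Lemma~\ref{lem:fano}; its pairs on the three lines through $p_7$ are complementary, which is your ansatz. Your line colors are exactly $c_0\circ\delta$ for the $\delta$ listed in Section~\ref{sec:heawood}, which already satisfies $\delta(\ell_{167})=p_7$.

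So the difference lies only in the justification:
\begin{itemize}
\item Your route is explicit and self-contained. It makes transparent why $\pi_1,\pi_2$ are free: $x$, $y$, and every neighbor of $x$ or $y$ see a complementary pair among their other neighbors. The cost is that you need flag-transitivity of $F_7$ and a hand check.
\item The paper's route needs only point-transitivity and the freedom in choosing $\delta$. It also recycles Lemma~\ref{lem:fano}, which drives Lemma~\ref{lem:strong} as well.
\end{itemize}

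A minor remark: since both end colors are free in your construction, the duality step for the case where $x$ is a line is unnecessary. You can simply swap the names of the endpoints.
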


\begin{proof}
By self-duality, we may assume that $x$ is a point and $y$ is a line containing $x$.
Choose $\delta$ as above with $\delta(y)=x$, and let $c_0$ be the map of Lemma~\ref{lem:fano} with $p=x$.
Color $x$ with $\pi_1$, $y$ with $\pi_2$, each point $q\ne x$ with $c_0(q)$, and each line $\ell\ne y$ with $c_0(\delta(\ell))$.

Each line $\ell$ is adjacent in $\Hw-e$ to all points of $\ell\setminus\{x\}$, whose colors cover $[4]$ by Lemma~\ref{lem:fano}.
Each point $q$ is adjacent in $\Hw-e$ to all lines containing $q$ other than $y$.
Recall that the images under $\delta$ of the three lines containing $q$ are the points of $\ell_q$.
Since $\delta(y)=x$, the images of the lines containing $q$ other than $y$ are exactly the points of $\ell_q\setminus\{x\}$, so their colors cover $[4]$ by Lemma~\ref{lem:fano}.
Hence this is a $(4{:}2)$-coloring of $\Hw-e$ with the required colors on $x$ and $y$.
\end{proof}

For any edge $e$ of $\Hw$, every $(4{:}2)$-coloring of $\Hw-e$ is also a $(4{:}2)$-coloring of $\Hw$, so Lemma~\ref{lem:minus} yields the following corollary.

\begin{corollary}\label{cor:heawood}
The Heawood graph $\Hw$ has a $(4{:}2)$-coloring.
\end{corollary}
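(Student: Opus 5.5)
The plan is to obtain the coloring of $\Hw$ directly from Lemma~\ref{lem:minus}, using the fact that adding an edge can only enlarge neighborhoods. Concretely, I fix an arbitrary edge $e=xy$ of $\Hw$ (for instance $x=p_1$, $y=\ell_{123}$) and arbitrary colors $\pi_1,\pi_2\in\bin$, say $\pi_1=\pi_2=\{1,2\}$. Lemma~\ref{lem:minus} then provides a $(4{:}2)$-coloring $c$ of $\Hw-e$. I take this same map $c\colon V(\Hw)\to\bin$ as the candidate coloring of $\Hw$.

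Next I check the covering condition vertex by vertex. For every vertex $v$ we have $N_{\Hw-e}(v)\subseteq N_{\Hw}(v)$. Indeed, the two neighborhoods coincide for $v\notin\{x,y\}$, while $x$ and $y$ each gain one neighbor when $e$ is restored. Hence
\[
  [4]=\bigcup_{u\in N_{\Hw-e}(v)}c(u)\subseteq\bigcup_{u\in N_{\Hw}(v)}c(u)\subseteq[4],
\]
so every vertex of $\Hw$ sees all four colors under $c$. One small point is that $\Hw-e$ has no isolated vertices, since $x$ and $y$ still have degree $2$. This is needed because the notion of a $(4{:}2)$-coloring of $\Hw-e$ is only defined for such graphs.

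There is no real obstacle, since all of the substantive work was done in Lemma~\ref{lem:minus} (through Lemma~\ref{lem:fano} and the self-duality map $\delta$). The only step requiring any care is the monotonicity of the covering condition under adding edges. As a sanity check, an alternative route would be Lemma~\ref{lem:strong}, coloring $x$ afterwards. However, $x$ would then see only the color $\pi$ on all three of its neighbors, so that route fails, and Lemma~\ref{lem:minus} is the right tool.
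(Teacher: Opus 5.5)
Your proposal is correct and is essentially the paper's argument. Apply Lemma~\ref{lem:minus} to any edge $e$, and note that restoring $e$ only enlarges neighborhoods, so every $(4{:}2)$-coloring of $\Hw-e$ is one of $\Hw$.
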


\section{Proof of Theorem~\ref{thm:main}}\label{sec:proof}

\begin{theorem}\label{thm:bip}
Every connected cubic bipartite graph has a $(4{:}2)$-coloring.
\end{theorem}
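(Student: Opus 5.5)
I would argue by induction on $|V(G)|$, taking a minimal counterexample $G$: a connected cubic bipartite graph with no $(4{:}2)$-coloring. By Proposition~\ref{prop:easy} we may assume $\dt(G)=1$. Since $G$ is connected, cubic and bipartite, it is $1$-extendable with minimum degree $3$, so Theorem~\ref{thm:mccuaig} applies. It gives a set $A\subsetneq V(G)$ with $G[A]\cong\Hw-x$ for some vertex $x$ of $\Hw$. If $G\cong\Hw$ we are done by Corollary~\ref{cor:heawood}, so assume $G\not\cong\Hw$.

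In $\Hw-x$ the three neighbors $w_1,w_2,w_3$ of $x$ have degree $2$, and all other vertices have degree $3$. Since $G$ is cubic and $G[A]$ is induced, each $w_i$ has exactly one neighbor $z_i\notin A$, and no other vertex of $A$ has a neighbor outside $A$. The $w_i$ lie on the same side of the bipartition. Hence the $z_i$ lie on the other side, and they are distinct or not according to the structure. I would form a smaller graph $G'$ by deleting $A$ and adding a new vertex $x'$ adjacent to $z_1,z_2,z_3$. This is the standard "tight cut contraction", and $G'$ is again cubic and bipartite, with $x'$ on the side of the $w_i$. If two of the $z_i$ coincide, $G'$ would have parallel edges. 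I expect to handle this separately: either the construction forces $G$ to be small, or I choose the $\Hw-x$ copy differently. Connectivity of $G'$ follows from that of $G$, and $|V(G')|<|V(G)|$ since $|A|=13$.

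By minimality, $G'$ has a $(4{:}2)$-coloring $c'$. Let $\pi=c'(x')$. By Lemma~\ref{lem:strong}, $G[A]\cong\Hw-x$ has a $(4{:}2)$-coloring $c_A$ with $c_A(w_i)=\pi$ for all $i$. Define $c=c'$ on $V(G)\setminus A$ and $c=c_A$ on $A$. We then check every vertex:
\begin{itemize}
\item Each vertex of $A$ sees $[4]$ already within $G[A]$.
\item Each vertex outside $A$ other than the $z_i$ has the same neighborhood as in $G'$.
\item Each $z_i$ had $x'$, colored $\pi$, as a neighbor in $G'$, and now has $w_i$, colored $\pi$, in its place, so it sees the same colors.
\end{itemize}
Hence $c$ is a $(4{:}2)$-coloring of $G$, a contradiction.

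The main obstacle is the degenerate case where the $z_i$ are not distinct, so that $G'$ is not simple. The subcase $z_1=z_2=z_3$ cannot occur unless $G$ is disconnected beyond $A\cup\{z\}$; in fact it forces $G\cong\Hw$. If exactly two coincide, say $z_1=z_2=z\neq z_3$, then $z$ has one further neighbor $u$, and I would instead contract differently. I would first try to use Lemma~\ref{lem:minus}: treat the structure as $\Hw$ minus an edge attached through the bridge-like edges, and color $A\cup\{z\}$ as a copy of $\Hw-e$ with prescribed end colors. Then I would combine this with a coloring of a smaller cubic bipartite graph obtained by splicing out that piece, for instance by joining $u$ and $z_3$'s side via a $2$-edge cut reduction. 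Alternatively, I would extend the induction to multigraphs, where the argument above goes through verbatim once a $(4{:}2)$-coloring is defined with multiplicities.
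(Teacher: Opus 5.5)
Your case with $z_1,z_2,z_3$ distinct is exactly the paper's case~(a). Your remark that $z_1=z_2=z_3$ forces $G\cong\Hw$ is also correct.

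\textbf{The gap: the case $z_1=z_2=z\neq z_3$ is never completed.} You offer several options but carry none of them through. You correctly see that $G[A\cup\{z\}]\cong\Hw-e$, with the ends of $e$ corresponding to $z$ and $w_3$, so Lemma~\ref{lem:minus} is the right tool. But you never fix a reduced graph. The natural reading of your suggestion is $G'=G-(A\cup\{z\})+uz_3$. This does transfer colors correctly: choose $c(z)=c'(z_3)$ and $c(w_3)=c'(u)$. However, $G'$ has a parallel edge whenever $uz_3$ is already an edge of $G$, and nothing in the setup rules this out.

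\textbf{The multigraph fallback does not work verbatim.} First, the statement is false for multigraphs. Two vertices joined by three parallel edges form a connected cubic bipartite multigraph with no $(4{:}2)$-coloring, since each vertex sees only one color pair. So the induction hypothesis would have to be restricted, and you would have to show that contraction never leaves the restricted class. Second, Theorem~\ref{thm:mccuaig} is quoted for simple graphs. A double edge leaves a vertex of degree $2$ in the underlying simple graph, so you would have to re-establish the structure theorem in your setting. Your other suggestions, that the construction ``forces $G$ to be small'' or that you can ``choose the $\Hw-x$ copy differently'', are not justified.

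\textbf{The missing idea: a reduction that is guaranteed to stay simple.} The paper replaces $A^*=A\cup\{z\}$ by a fresh copy of $Q=K_{3,3}-pq$, joining $p$ to $u$ and $q$ to $z_3$.
\begin{itemize}
\item Because $Q$ consists of new vertices, $G'$ is always simple.
\item $G'$ is cubic and connected, and it is bipartite because $u,z_3$ lie in opposite classes, as do $p,q$.
\item $G'$ has $8$ fewer vertices than $G$.
\end{itemize}
Given a $(4{:}2)$-coloring $c'$ of $G'$, Lemma~\ref{lem:minus} gives a $(4{:}2)$-coloring of $G[A^*]$ with $c(z)=c'(p)$ and $c(w_3)=c'(q)$. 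Then $u$ and $z_3$ see exactly the colors they saw in $G'$. You could instead keep your direct-edge reduction and treat $uz_3\in E(G)$ by following the resulting chain of $2$-edge cuts, but that needs its own argument, including the color constraints along the chain.
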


\begin{proof}
Suppose not, and let $G$ be a counterexample minimizing $|V(G)|$.
By Proposition~\ref{prop:easy}, $\dt(G)=1$. By Corollary~\ref{cor:heawood}, $G$ is not isomorphic to $\Hw$.

Since $G$ is connected, cubic and bipartite, it is $1$-extendable, and Theorem~\ref{thm:mccuaig} gives a set $A\subsetneq V(G)$ with $G[A]\cong\Hw-x$ for some vertex $x$ of $\Hw$.
Identify $G[A]$ with $\Hw-x$.
Then the vertices of degree two in $G[A]$ are exactly the three neighbors $t_1$, $t_2$, $t_3$ of $x$ in $\Hw$, and they are in the same part of the bipartition of $G$.
Since $G$ is cubic, exactly three edges join $A$ to $V(G)\setminus A$, namely $t_is_i$ at each $t_i$.
If $s_1=s_2=s_3$, then $V(G)\setminus A=\{s_1\}$ since $G$ is connected, and so $G\cong\Hw$.
Since $G\not\cong\Hw$, by symmetry we may assume that either $s_1,s_2,s_3$ are distinct or $s_1=s_2\ne s_3$.

We define a set $A^*\supseteq A$, a connected bipartite graph $Q$, and a graph $G'$ as follows:
\begin{itemize}
\item[(a)] If $s_1,s_2,s_3$ are distinct, then let $A^*=A$, and let $Q$ be a single vertex $x^*$.
Let $G'$ be obtained from the disjoint union of $G-A^*$ and $Q$ by adding the edges $x^*s_1$, $x^*s_2$ and $x^*s_3$.

\item[(b)] If $s_1=s_2$, then $N_G(s_1)=\{t_1,t_2,x'\}$ for some $x'\notin A$.
Let $A^*=A\cup\{s_1\}$, and let $Q=K_{3,3}-pq$ for an edge $pq$ of $K_{3,3}$.
Let $G'$ be obtained from the disjoint union of $G-A^*$ and $Q$ by adding the edges $px'$ and $qs_3$.
\end{itemize}
In case~(b), the vertices $x'$ and $s_3$ lie in opposite parts of the bipartition of $G$, and so do $p$ and $q$ in $K_{3,3}-pq$.
Hence, in both cases, $G'$ is simple, cubic and connected, and it is bipartite with each vertex of $Q$ joined to $G-A^*$ placed opposite its neighbors in $G-A^*$.
Moreover, $|V(G')|<|V(G)|$ in both cases.
By the minimality of $G$, the graph $G'$ has a $(4{:}2)$-coloring $c'$.

Next, we color $G[A^*]$ according to the colors of $c'$ on $Q$.
Note that $G[A^*]$ is isomorphic to $\Hw-x$ in case~(a) and to $\Hw-e$  in case~(b), where the endpoints of $e$ correspond to $s_1$ and $t_3$.
In case~(a), Lemma~\ref{lem:strong} gives a $(4{:}2)$-coloring $c$ of $G[A^*]$ with $c(t_i)=c'(x^*)$ for each $i$.
In case~(b), Lemma~\ref{lem:minus} gives a $(4{:}2)$-coloring $c$ of $G[A^*]$ with $c(s_1)=c'(p)$ and $c(t_3)=c'(q)$.

Use $c$ on $A^*$ and $c'$ on $V(G)\setminus A^*$.
Every vertex of $A^*$ sees all four colors by the choice of $c$.
For a vertex of $G-A^*$, the only possible change of its neighborhood is that some neighbors in $Q$ are replaced by neighbors in $A^*$ of the same colors.
Hence this gives a $(4{:}2)$-coloring of $G$, which is a contradiction.
\end{proof}

\begin{proof}[Proof of Theorem~\ref{thm:main}]
A $(4{:}2)$-coloring can be chosen separately on each component, so let $G$ be connected.
If $G$ is bipartite, apply Theorem~\ref{thm:bip}.
Otherwise its bipartite double cover $B(G)$ is connected and cubic, so Theorem~\ref{thm:bip} gives a $(4{:}2)$-coloring of $B(G)$, and
Proposition~\ref{prop:double} gives one of $G$.
Proposition~\ref{prop:code} now yields four total dominating sets such that every vertex lies in exactly two of them.
\end{proof}

We now prove Corollary~\ref{cor:hyper}.

\begin{proof}[Proof of Corollary~\ref{cor:hyper}]
Let $H$ be a $3$-uniform $3$-regular hypergraph, where repeated edges are allowed.
Let $I(H)$ be the incidence graph of $H$.
Then $I(H)$ is a simple cubic bipartite graph, so by Theorem~\ref{thm:main} and Proposition~\ref{prop:code} it has a $(4{:}2)$-coloring $c$.
Then $\bigcup_{v\in e}c(v)=[4]$ for every edge $e$ of $H$.
Hence, for each $i\in[4]$, the set $T_i=\{v\in V(H): i\in c(v)\}$ is a transversal of $H$, and every vertex of $H$ lies in exactly two of $T_1,\dots,T_4$.
\end{proof}

\section*{Acknowledgments}
Boram Park was supported by the National Research Foundation of Korea (NRF) grant funded by the Korea government (MSIT) (No.~RS-2025-00523206) and by the New Faculty Startup Fund from Seoul National University.
Yoshio Sano was supported by JSPS KAKENHI Grant Number JP23K03194 and MEXT Supporting Pioneering Research through AI for 1,000 Discovery challenges Program (SPReAD) Japan Grant Number JPMXP1726292712.

\bigskip
\noindent\textbf{Declaration of generative AI use.}
\par\smallskip
\noindent  
Anthropic Claude Opus 5 and OpenAI GPT-6 Astra were
used to assist with preparing an initial draft. 
The authors subsequently revised the manuscript and take full responsibility for its final content.

\end{document}